\documentclass[12pt,reqno]{amsart}

\usepackage{amsmath,amssymb,amsthm}
\usepackage[margin=1in]{geometry}
\usepackage{xcolor,tikz}
\usetikzlibrary{arrows.meta,calc}
\usepackage{hyperref}
\hypersetup{colorlinks=true, linkcolor=blue!50!black, citecolor=blue!50!black,
  urlcolor=blue!50!black}

\definecolor{coli}{HTML}{1F6FC4}   
\definecolor{colj}{HTML}{C07A12}   

\theoremstyle{plain}
\newtheorem{theorem}{Theorem}[section]
\newtheorem{proposition}[theorem]{Proposition}
\newtheorem{lemma}[theorem]{Lemma}

\theoremstyle{definition}
\newtheorem{definition}[theorem]{Definition}
\newtheorem{remark}[theorem]{Remark}

\newtheorem{declaration}[theorem]{Declaration}

\newcommand{\NN}{\mathbb{N}}

\newcommand{\Sk}{\operatorname{Sk}}
\newcommand{\eps}{\varepsilon}
\newcommand{\Em}{\mathsf{E}}
\newcommand{\Rec}{\mathsf{R}}
\DeclareMathOperator{\rge}{r}
\DeclareMathOperator{\src}{s}

\begin{document}

\title[Planar higher-rank trees have rank at most four]
{Planar non-degenerate higher-rank trees have rank at most four}

\author{David Pask}
\address{Math Department, James Cook University, 1 James Cook Dr, Douglas QLD 4814, Australia}
\email{david.a.pask@gmail.com}
\date{\today}
\keywords{Higher-rank graph, tree, planarity, Kuratowski}

\begin{abstract}
We prove that a finite, connected, singly connected, locally convex higher-rank
tree whose $1$-skeleton is planar and which is \emph{non-degenerate}, in the sense
that every edge of each colour forms a commuting square with every other colour,
has rank at most four. Under these hypotheses this establishes the planarity
conjecture stated in \cite{Pask}. The obstruction side of the argument uses only the
non-planarity of $K_5$; it makes no appeal to the four-colour theorem. The engine
is a monotonicity property of the set of colours emitted at a vertex
(``backward propagation''), which forces, in any finite singly connected
non-degenerate $k$-graph, a single vertex emitting all $k$ colours; once $k\ge 5$,
local convexity manufactures a subdivision of $K_5$ at such a vertex.
\end{abstract}

\subjclass[2010]{Primary: {05C20}; Secondary: {57M50,18D99}}

\maketitle

\section{Introduction}

Higher-rank graphs ($k$-graphs) were introduced by Kumjian and Pask \cite{KP} as a
combinatorial model for a class of $C^\ast$-algebras. A rank-one graph is an
ordinary directed graph, and its path category is free; for $k\ge 2$ the category
is genuinely two-dimensional, encoded by a $k$-coloured $1$-skeleton together with
a complete collection of bi-coloured commuting squares \cite{HRSW,RSY}. Recent
work has studied $k$-graphs from a combinatorial and topological standpoint, in
particular their fundamental group and groupoid \cite{PRQ,KPW,KPSS}, and the class
of \emph{higher-rank trees}: connected $k$-graphs with trivial fundamental group.

In \cite{Pask} a family of higher-rank trees is constructed from polyhedral graphs.
Each member is connected, singly connected, locally convex, acyclic, embeds in its
fundamental groupoid, and is \emph{planar}: its $1$-skeleton $\Sk$ is a planar
graph, satisfying the higher-rank analogue
$\lvert\Sk^1\rvert-2\lvert\Sk^0\rvert+4=0$ of the elementary identity
$\lvert T^1\rvert-\lvert T^0\rvert+1=0$ for a finite rank-one tree. The
construction produces ranks $2,3,4$ only; the bound $4$ enters through the
four-colour theorem applied to the faces of the polyhedron. In
\cite[\S5, Question]{Pask} we asked for intrinsic criteria guaranteeing planarity, noting
that \emph{degenerate} examples exist in arbitrarily high rank --- examples in
which ``not all edges of each degree have relations between them'' --- and
conjecture a theorem ``akin to the four-colour theorem'' bounding the rank of a
planar higher-rank tree.

The purpose of this note is to prove such a bound. We isolate the non-degeneracy
condition implicit in the wording of \cite{Pask} --- that every edge of each colour forms a commuting square with
every other colour, which we call \emph{edge-level non-degeneracy}
(Definition~\ref{def:nd}) --- and prove the following.

\begin{theorem}\label{thm:main}
Let $\Lambda$ be a finite, connected, singly connected, locally convex higher-rank
tree of rank $k\ge 5$. If $\Lambda$ is edge-level non-degenerate, then its
$1$-skeleton $\Sk_\Lambda$ is non-planar. Consequently, a finite, connected, singly
connected, locally convex, edge-level non-degenerate higher-rank tree with planar
$1$-skeleton has rank at most $4$.
\end{theorem}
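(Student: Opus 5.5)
The plan follows the strategy announced in the abstract: first produce a vertex $v$ emitting edges of all $k$ colours, and then, for $k\ge 5$, use local convexity at $v$ to build a subdivision of $K_5$ inside $\Sk_\Lambda$. By Kuratowski's theorem $\Sk_\Lambda$ is then non-planar. The second sentence of Theorem~\ref{thm:main} is simply the contrapositive.

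\emph{Step 1 (backward propagation).} For each vertex $w$ let $C(w)\subseteq\{1,\dots,k\}$ be the set of colours emitted at $w$, i.e.\ the set of $i$ with $w\Lambda^{e_i}\neq\emptyset$. Take an edge $f$ of colour $j$ with $\src(f)=u$ and $\rge(f)=w$, and any colour $i\in C(u)$ with $i\neq j$. The plan is to show $C(u)\subseteq C(w)$ up to the colour $j$ itself.
\begin{itemize}
\item Choose $g\in u\Lambda^{e_i}$.
\item By edge-level non-degeneracy, $f$ lies in a commuting square with some colour-$i$ edge; I expect the definition to let me take that square to be the one through $fg$.
\item The factorisation property then writes $fg=g'f'$ with $g'$ of colour $i$ and $\rge(g')=w$, so $i\in C(w)$.
\end{itemize}
I also expect $j\in C(w)$, since $w$ receives... more precisely $w$ emits $f$ itself. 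Hence $C(u)\cup\{j\}\subseteq C(w)$, and the emitted-colour set only grows as one moves backwards along edges.

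Now use finiteness and acyclicity, which follow from singly connected plus finite. Take a vertex $w$ that is maximal for the order ``there is a path ending at $w$'', i.e.\ a source of the reverse order. Every colour occurs as some edge, and following its path backwards to such a maximal vertex accumulates that colour. Connectedness should then let me merge these into a single vertex $v$ with $C(v)=\{1,\dots,k\}$. The cleanest way is to show the maximal vertex is unique, using the trivial fundamental group: two distinct maxima joined by an undirected path would force a nontrivial zigzag. I expect this uniqueness argument, or a substitute for it, to be the main obstacle. What I would try first is an argument directly in the fundamental groupoid, showing that the maximal elements of the reachability preorder in a simply connected locally convex $k$-graph with this propagation property form a single point. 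Failing that, I would apply propagation along an undirected path and argue with local convexity at the turning points.

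\emph{Step 2 ($K_5$ from local convexity).} Fix $v$ and edges $\lambda_1,\dots,\lambda_5\in v\Lambda^{e_{i}}$ of five distinct colours $i=1,\dots,5$.
\begin{itemize}
\item For each pair $a<b$, non-degeneracy together with local convexity gives a path $\lambda_a\mu_{ab}=\lambda_b\mu_{ba}$ of degree $e_a+e_b$.
\item Write $x_{ab}$ for its source. This yields a length-2 path in $\Sk_\Lambda$ from $\rge$-side branch vertex $\src(\lambda_a)$ through $x_{ab}$ to $\src(\lambda_b)$ that avoids $v$.
\item Take as branch vertices $v,\src(\lambda_1),\dots,\src(\lambda_5)$ minus one, or better $\src(\lambda_1),\dots,\src(\lambda_5)$ themselves. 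The ten paths through the $x_{ab}$ give all ten edges of $K_5$.
\end{itemize}

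It remains to check that the subdivision is genuine. The vertices $\src(\lambda_a)$ must be distinct, and the vertices $x_{ab}$ must be distinct from each other and from the branch vertices. This should follow from the unique factorisation property combined with single connectedness: a coincidence $x_{ab}=x_{cd}$ with $\{a,b\}\neq\{c,d\}$ gives two paths from $v$ to the same vertex of different degrees, hence a cycle in $\Sk$ that is not a commuting relation. That contradicts trivial fundamental group, since degrees are preserved by the groupoid homomorphism to $\mathbb{Z}^k$. So $\Sk_\Lambda$ contains a subdivided $K_5$ and is non-planar.
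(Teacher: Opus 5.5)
\textbf{Gap: the merging step in Step~1.} Your propagation step and your Step~2 are essentially the paper's Lemma~\ref{lem:prop} and Lemma~\ref{lem:k5}. Two small remarks: the square through $fg$ exists by the factorisation property alone, and distinctness in Step~2 follows directly from single connectedness, since a coincidence puts two elements of different degrees in $v\Lambda x$.

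The genuine gap is merging colours into one vertex. Notice that your argument never uses edge-level non-degeneracy essentially, and it must, since the theorem fails without it.

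The uniqueness of a maximal vertex (one that is the source of no edge) that you propose to prove is false, even under all the hypotheses. Take two $(1,2)$-squares $\lambda,\lambda'$ glued only at $\src(\lambda)=\src(\lambda')$. This gives a finite, connected, singly connected, locally convex, edge-level non-degenerate $2$-graph with trivial fundamental group (its $2$-complex is a wedge of two discs). Yet $\rge(\lambda)\neq\rge(\lambda')$ are both maximal.

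Worse, glue ten squares, one for each pair of colours in $\{1,\dots,5\}$, at their source corners. You get a singly connected, simply connected, locally convex $5$-graph with ten maximal vertices, each emitting only two colours. So no argument using only simple connectivity, local convexity and propagation can produce a vertex with $C(v)=\{1,\dots,k\}$.

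Your fallback fails for the same reason. A zigzag between two maximal vertices turns at common \emph{sources}, such as $\src(\lambda)$ above, and local convexity says nothing there.

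\textbf{The missing idea.} Apply non-degeneracy at a single maximal vertex; then no merging is needed.

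Start from any edge and keep composing edges on the range side. By finiteness and Lemma~\ref{lem:noloop} this stops at a vertex $t$ that is the range of some $e\in t\Lambda^{\eps_i}$ but the source of no edge.

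For $j\neq i$, non-degeneracy puts $e$ in an $(i,j)$-square. The edge $e$ cannot be the side $\mu'$ of $\lambda=\nu'\mu'$, since then $\nu'$ would be an edge with source $t$. Hence $\lambda=e\nu$ with $\nu\in\src(e)\Lambda^{\eps_j}$, so $j\in C(\src(e))\subseteq C(t)$ by propagation. Together with $i\in C(t)$ this gives $C(t)=\{1,\dots,k\}$.

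This is exactly what the paper extracts from Lemma~\ref{lem:count} in Proposition~\ref{prop:conc}. There it is argued by contradiction: otherwise every range of an edge is the source of another edge, and the resulting infinite composable chain closes into a forbidden loop.
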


The proof has two ingredients, both elementary. First, the colours emitted at a
vertex satisfy a monotonicity law: along any edge the edges emitted by the range vertex
contains that of the source vertex (Lemma~\ref{lem:prop}). Combined with
edge-level non-degeneracy this forces, in a finite singly connected $k$-graph, a
single vertex that emits all $k$ colours (Proposition~\ref{prop:conc}); the finite
acyclicity supplied by single connectedness completes the argument. Second,
at a vertex emitting five colours, local convexity closes all ten of their
pairwise squares, and single connectedness makes the resulting ten ``diagonal''
vertices distinct, producing a subdivision of $K_5$ in the $1$-skeleton
(Lemma~\ref{lem:k5}). Non-planarity then follows from Kuratowski's theorem
\cite{Kur,Diestel}.

We stress that the obstruction uses only that $K_5$ is non-planar; the full
four-colour theorem \cite{AppelHaken} is required for the \emph{achievability} of
rank $4$ in the construction of \cite{Pask}, not for the bound proved here. The hypotheses and
their role are discussed in Section~\ref{sec:remarks}.

\section{Preliminaries}\label{sec:prelim}

We follow the conventions of \cite{KP,RSY,HRSW}. Let $\NN^k$ be the free abelian
monoid with generators $\eps_1,\dots,\eps_k$, ordered coordinatewise.

\begin{definition}[$k$-graph]
A \emph{higher-rank graph of rank $k$} (or \emph{$k$-graph}) is a countable category
$\Lambda$ together with a functor $d\colon\Lambda\to\NN^k$ satisfying the
\emph{factorisation property}: whenever $d(\lambda)=m+n$ there exist unique
$\mu,\nu\in\Lambda$ with $d(\mu)=m$, $d(\nu)=n$ and $\lambda=\mu\nu$. We write
$\rge,\src\colon\Lambda\to\Lambda^0$ for the range and source maps and
$\Lambda^m:=d^{-1}(m)$. Composition $\mu\nu$ is defined when $\src(\mu)=\rge(\nu)$,
and then $\rge(\mu\nu)=\rge(\mu)$, $\src(\mu\nu)=\src(\nu)$.
\end{definition}

\noindent
Since a $k$-graph is a category, elements are composed from right to left. For this reason we often draw edges pointing from right to left (cf.\ Definition~\ref{def:square} below.

Elements of $\Lambda^0$ are \emph{vertices}; elements of $\Lambda^{\eps_i}$ are
\emph{edges of colour $i$}. For $v,w\in\Lambda^0$ and $F\subseteq\Lambda$ we write,
following \cite{RSY}, $vF=\rge^{-1}(v)\cap F$ and $Fw=\src^{-1}(w)\cap F$. The
\emph{$1$-skeleton} $\Sk_\Lambda$ is the $k$-coloured directed graph with vertices
$\Lambda^0$ and edges $\bigcup_{i\le k}\Lambda^{\eps_i}$, an edge $f$ having colour
$i$ if and only if $f\in\Lambda^{\eps_i}$.

\begin{definition}[Commuting squares]\label{def:square}
For $i\ne j$, a \emph{commuting $(i,j)$-square} is an element $\lambda\in
\Lambda^{\eps_i+\eps_j}$, presented through its two factorisations
\[
\begin{tikzpicture}

\node[inner sep=0.8pt] (00) at (0,-1) {$\bullet$};
\node[inner sep=0.8pt] (10) at (1,-1) {$\bullet$}
	edge[->,blue] node[auto,black] {$\nu'$} (00);
\node[inner sep=0.8pt] (01) at (0,0) {$\bullet$}
	edge[->,red,dashed] node[auto,black,swap] {$\mu$} (00);
\node[inner sep=0.8pt] (11) at (1,0) {$\bullet$}
	edge[->,blue] node[auto,black,swap] {$\nu$} (01)
	edge[->,red,dashed] node[auto,black] {$\mu'$} (10);

\node[inner sep=8pt] at (-7,-0.5) {$\lambda=\mu\nu=\nu'\mu',\qquad
d(\mu)=d(\mu')=\eps_i,\quad d(\nu)=d(\nu')=\eps_j . $};
\end{tikzpicture}
\]
Its four \emph{sides} are the edges $\mu,\nu,\nu',\mu'$. An edge $e$
\emph{lies in} (or \emph{borders}) the square if $e\in\{\mu,\nu,\nu',\mu'\}$.
\end{definition}

By the factorisation property, $(i,j)$-squares are in bijection with
$\Lambda^{\eps_i+\eps_j}$, and each square determines a unique cofactorisation;
this is the completeness of the canonical set of squares of a $k$-graph
\cite[Lemma 4.2]{HRSW}.

\begin{definition}[Local convexity {\cite[Definition~3.9]{RSY}}]\label{def:lc}
$\Lambda$ is \emph{locally convex} if for all $i\ne j$ and all $e\in
\Lambda^{\eps_i}$, $f\in\Lambda^{\eps_j}$ with $\rge(e)=\rge(f)$, there exist
$e'\in\src(f)\Lambda^{\eps_i}$ and $f'\in\src(e)\Lambda^{\eps_j}$ with
$ef'=fe'$.
\end{definition}

\noindent
That is, two differently coloured edges with a common range close up to a
commuting square.

\begin{definition}[Connected, singly connected {\cite[Defnition~2.6]{Pask}}]
$\Lambda$ is \emph{connected} if the equivalence relation on $\Lambda^0$ generated
by $\{(u,v):u\Lambda v\ne\emptyset\}$ is all of $\Lambda^0\times\Lambda^0$, and
\emph{singly connected} if $\lvert u\Lambda v\rvert\le 1$ for all $u,v$.
\end{definition}

\begin{definition}[Higher-rank tree]
A \emph{higher-rank tree} is a connected $k$-graph with trivial fundamental group
(in the sense of \cite{PRQ}). The trees of \cite[Theorem~A]{Pask} are moreover singly
connected and locally convex; these are the only structural properties we use
below.
\end{definition}

We call $\Lambda$ \emph{finite} if $\Lambda^0$ is finite. The $1$-skeleton is
\emph{planar} if the underlying undirected graph of $\Sk_\Lambda$ admits a planar
embedding; this is the meaning of ``planar'' in \cite[Theorem~A]{Pask}. The only graph with one vertex which is a tree consists of just that vertex (and is planar), so we shall assume $\Lambda$ has more that one vertex.

The only fact we use about single connectedness is the absence of nontrivial
directed loops.

\begin{lemma}[No directed loops]\label{lem:noloop}
If $\Lambda$ is singly connected and $\lambda\in v\Lambda v$ then $d(\lambda)=0$,
i.e. $\lambda=\mathrm{id}_v$.
\end{lemma}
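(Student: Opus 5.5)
The idea is to compare $\lambda$ with its square $\lambda\lambda$: both lie in $v\Lambda v$, so single connectedness forces them to be equal, and the degree map then forces $d(\lambda)=0$.

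\textbf{Step 1.} Since $\src(\lambda)=\rge(\lambda)=v$, the composite $\lambda\lambda$ is defined. It satisfies $\rge(\lambda\lambda)=v$ and $\src(\lambda\lambda)=v$, so $\lambda\lambda\in v\Lambda v$.

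\textbf{Step 2.} Single connectedness gives $\lvert v\Lambda v\rvert\le 1$. The identity $\mathrm{id}_v$ lies in $v\Lambda v$, and so do $\lambda$ and $\lambda\lambda$. Hence all three coincide. In particular $\lambda=\mathrm{id}_v$ directly.

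\textbf{Step 3.} If one prefers to phrase the conclusion through the degree, apply the functor $d$ to $\lambda\lambda=\lambda$. This gives $2d(\lambda)=d(\lambda)$ in $\NN^k$, so $d(\lambda)=0$. By the factorisation property, an element of degree $0$ is the identity at its range, so again $\lambda=\mathrm{id}_v$.

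There is no real obstacle here. The only point needing care is that $\mathrm{id}_v$ genuinely belongs to $v\Lambda v$, which holds because $\Lambda$ is a category. Once that is noted, the argument is immediate: the cardinality bound from single connectedness does all the work.
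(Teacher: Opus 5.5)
Your proof is correct and is essentially the paper's argument: $\mathrm{id}_v$ and $\lambda$ both lie in $v\Lambda v$, so $\lvert v\Lambda v\rvert\le 1$ forces $\lambda=\mathrm{id}_v$, and hence $d(\lambda)=0$. The detour through $\lambda\lambda$ in Steps~1 and~3 is harmless but unnecessary, because Step~2 already concludes directly from $\mathrm{id}_v,\lambda\in v\Lambda v$.
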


\begin{proof}
The identity $\mathrm{id}_v\in v\Lambda v$ has $d(\mathrm{id}_v)=0$. Any
$\lambda\in v\Lambda v$ with $d(\lambda)\ne 0$ is a second element, contradicting
$\lvert v\Lambda v\rvert\le 1$.
\end{proof}

\section{Emission, reception, and propagation}

\noindent
Throughout this section $\Lambda$ is a $k$-graph.

\begin{definition}\label{def:ER}
For $v\in\Lambda^0$ set
\[
\Em(v)=\{\,i: v\Lambda^{\eps_i}\ne\emptyset\,\},
\qquad
\Rec(v)=\{\,i: \Lambda^{\eps_i}v\ne\emptyset\,\}.
\]
We say $v$ \emph{emits} colour $i$ if $i\in\Em(v)$ (there is a colour-$i$ edge with
range $v$) and \emph{receives} colour $i$ if $i\in\Rec(v)$ (there is a colour-$i$
edge with source $v$).
\end{definition}

\begin{definition}[Edge-level non-degeneracy]\label{def:nd}
$\Lambda$ is \emph{edge-level non-degenerate} if for every colour $i$, every
$e\in\Lambda^{\eps_i}$, and every $j\ne i$, the edge $e$ lies in some commuting
$(i,j)$-square.
\end{definition}

\noindent
This is the condition that every edge of each colour ``has relations with'' every
other colour; its negation is the degeneracy noted in \cite[\S5]{Pask}.

\begin{lemma}[Boxing criterion]\label{lem:count}
Let $e\in\Lambda^{\eps_i}$ and $j\ne i$. Then $e$ lies in an $(i,j)$-square if and
only if
\[
j\in\Em(\src(e))\cup\Rec(\rge(e)).
\]
Consequently $\Lambda$ is edge-level non-degenerate if and only if
\[
\Em(\src(e))\cup\Rec(\rge(e))\;\supseteq\;\{1,\dots,k\}\setminus\{i\}
\qquad\text{for every }e\in\Lambda^{\eps_i}.
\]
\end{lemma}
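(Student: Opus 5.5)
The plan is to unwind both sides of the criterion using the factorisation property. An $(i,j)$-square is an element $\lambda\in\Lambda^{\eps_i+\eps_j}$ with $\lambda=\mu\nu=\nu'\mu'$, where $d(\mu)=d(\mu')=\eps_i$ and $d(\nu)=d(\nu')=\eps_j$. The edge $e$ of colour $i$ can sit in such a square in exactly two positions: as $\mu$ (the colour-$i$ side at the range, top left in the picture) or as $\mu'$ (the colour-$i$ side at the source). I will treat these two positions separately and match them with the two sets in the union.

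\emph{Only if.} Suppose $e=\mu$. Then $\nu$ is a colour-$j$ edge with $\rge(\nu)=\src(\mu)=\src(e)$, so $j\in\Em(\src(e))$. Suppose instead $e=\mu'$. Then $\nu'$ is a colour-$j$ edge with $\src(\nu')=\rge(\mu')=\rge(e)$, so $j\in\Rec(\rge(e))$.

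\emph{If.} Suppose $j\in\Em(\src(e))$, and pick $f\in\src(e)\Lambda^{\eps_j}$. Then $ef$ is a composable pair, so $\lambda:=ef\in\Lambda^{\eps_i+\eps_j}$. Because $d(\lambda)=\eps_j+\eps_i$, the factorisation property gives unique $\nu'\in\Lambda^{\eps_j}$ and $\mu'\in\Lambda^{\eps_i}$ with $\lambda=\nu'\mu'$. This $\lambda$ is an $(i,j)$-square with $e=\mu$ as a side. Now suppose $j\in\Rec(\rge(e))$, and pick $g\in\Lambda^{\eps_j}\rge(e)$, so that $\src(g)=\rge(e)$. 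Then $ge\in\Lambda^{\eps_j+\eps_i}$. Factorising it the other way as $\mu\nu$ with $d(\mu)=\eps_i$ and $d(\nu)=\eps_j$ produces a square in which $e=\mu'$.

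The ``consequently'' part follows by applying the equivalence for every $e$ and every $j\ne i$, which is exactly Definition~\ref{def:nd}.

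The main obstacle is not mathematical; it is keeping the range and source conventions straight. In particular, $\Em$ refers to edges with range $v$ and $\Rec$ to edges with source $v$, and composition runs right to left. Note that local convexity is not needed anywhere: the factorisation property alone completes any composable bicoloured path to a square.
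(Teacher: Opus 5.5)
Your proof is correct and takes essentially the same route as the paper. Like the paper, you split according to whether $e$ occupies the position $\mu$ or $\mu'$ of the square, match these cases with $j\in\Em(\src(e))$ and $j\in\Rec(\rge(e))$ respectively, and use the factorisation property to supply the other factorisation; the paper merely states each case as a single ``such a $\lambda$ exists if and only if'' equivalence instead of separating the two directions.
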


\begin{proof}
By Definition~\ref{def:square}, $e$ is a colour-$i$ side of an $(i,j)$-square
$\lambda=\mu\nu=\nu'\mu'$ if and only if $e=\mu$ or $e=\mu'$.

If $e=\mu$ then $\lambda=e\nu$ with $\nu\in\Lambda^{\eps_j}$ and
$\src(e)=\rge(\nu)$, so $\nu\in\src(e)\Lambda^{\eps_j}$; such a $\lambda$ exists if and only if
$\src(e)\Lambda^{\eps_j}\ne\emptyset$, i.e. $j\in\Em(\src(e))$.

If $e=\mu'$ then $\lambda=\nu'e$ with $\nu'\in\Lambda^{\eps_j}$ and
$\src(\nu')=\rge(e)$, so $\nu'\in\Lambda^{\eps_j}\rge(e)$; such a $\lambda$ exists
if and only if $\Lambda^{\eps_j}\rge(e)\ne\emptyset$, i.e. $j\in\Rec(\rge(e))$.

Hence $e$ lies in an $(i,j)$-square if and only if $j\in\Em(\src(e))\cup\Rec(\rge(e))$. The
second statement is the conjunction over $j\ne i$.
\end{proof}

\begin{lemma}[Backward propagation]\label{lem:prop}
For every $e\in\Lambda^{\eps_i}$,
\[
\Em(\rge(e))\;\supseteq\;\{i\}\cup\Em(\src(e)).
\]
\noindent
For instance:
\[
\begin{tikzpicture}[
    >=Stealth, line cap=round,
    every node/.style={font=\small},
    edgei/.style={coli, very thick, ->},
    edgej/.style={colj, very thick, ->}
  ]
 
  \coordinate (v) at (0,4);
  \coordinate (w) at (4,4);
  \coordinate (y) at (0,0);
  \coordinate (x) at (4,0);
 
  \draw[edgei] (w) -- (v) node[midway, above=2pt, black] {$e\ (i)$};
  \draw[edgej] (w) -- (x) node[midway, right=2pt, black] {$\nu\ (j)$};
  \draw[edgej] (v) -- (y) node[midway, left=2pt, black]  {$\nu'\ (j)$};
  \draw[edgei] (x) -- (y) node[midway, below=2pt, black] {$\mu'\ (i)$};
 
  \foreach \p in {v,w,y,x}{ \fill (\p) circle (1.7pt); }
  \node[above left =1pt] at (v) {$v=\mathrm{r}(e)$};
  \node[above right=1pt] at (w) {$w=\mathrm{s}(e)$};
  \node[below left =1pt] at (y) {$y$};
  \node[below right=1pt] at (x) {$x$};
 
  \node[align=center] at (2,2)
    {$\nu'\, e =\mu' \, \nu$\\[2pt]\footnotesize (factorisation property)};
 
  \node[draw, rounded corners, fill=black!4, align=center, font=\footnotesize,
        anchor=west] at (5.55,2.7)
    {\textbf{hypothesis}\\ $\mathrm{s}(e)$ emits $j$\\ $j\in\mathsf{E}(\mathrm{s}(e))$};
  \node[draw, rounded corners, fill=colj!12, align=center, font=\footnotesize,
        anchor=east] at (-1.55,2.7)
    {\textbf{conclusion}\\ $\therefore\ \mathrm{r}(e)$ emits $j$\\ $j\in\mathsf{E}(\mathrm{r}(e))$};
 
  \draw[->, dashed, gray, line width=0.8pt]
       (w) .. controls (3,5.5) and (1,5.5) .. (v);
  \node[font=\footnotesize, gray!40!black] at (2,5.6)
    {colour $j$ propagates backwards along $e$};
 
  \node[anchor=north, font=\footnotesize] at (2,-1.2)
    {\textcolor{coli}{\rule[0.45ex]{16pt}{1.4pt}}~~colour $i$
     \qquad
     \textcolor{colj}{\rule[0.45ex]{16pt}{1.4pt}}~~colour $j$};
 
  \node[anchor=north, font=\footnotesize, text width=11.5cm, align=center]
       at (2,-1.9)
    {\textbf{Lemma 3.4.} A colour $j$ emitted at $\mathrm{s}(e)$ is forced to
     $\mathrm{r}(e)$; together with $i$ (emitted at $\mathrm{r}(e)$ by $e$
     itself) this gives
     $\mathsf{E}(\mathrm{r}(e))\supseteq\{i\}\cup\mathsf{E}(\mathrm{s}(e))$.};
 
\end{tikzpicture}
\]
\end{lemma}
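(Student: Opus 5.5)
The plan is to verify the two inclusions separately, using only the definition of $\Em$ and the factorisation property.

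First, $i\in\Em(\rge(e))$ holds immediately: $e$ is itself a colour-$i$ edge with range $\rge(e)$, so $\rge(e)\Lambda^{\eps_i}\ne\emptyset$.

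For the main inclusion, fix $j\in\Em(\src(e))$. If $j=i$ there is nothing further to show, since $i$ has already been placed in $\Em(\rge(e))$. So assume $j\ne i$, and choose $\nu\in\src(e)\Lambda^{\eps_j}$, which is possible because $j\in\Em(\src(e))$. The composite $e\nu$ is then defined, because $\src(e)=\rge(\nu)$. Its range is $\rge(e\nu)=\rge(e)$, and its degree is $d(e\nu)=\eps_i+\eps_j=\eps_j+\eps_i$.

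Next I will apply the factorisation property to $e\nu$ with the decomposition $\eps_j+\eps_i$. This yields unique $\nu'\in\Lambda^{\eps_j}$ and $\mu'\in\Lambda^{\eps_i}$ with $e\nu=\nu'\mu'$. From this equation, $\rge(\nu')=\rge(\nu'\mu')=\rge(e\nu)=\rge(e)$. Hence $\nu'\in\rge(e)\Lambda^{\eps_j}$, and so $j\in\Em(\rge(e))$. This is exactly the square drawn in the statement: the factorisation property supplies the side $\nu'$, and no local convexity is needed.

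There is no real obstacle here. The only point requiring care is the bookkeeping of the conventions: composition runs right to left, and $v\Lambda^{\eps_j}$ means edges with range $v$. With those fixed, one must check that the $j$-edge produced by refactorising has range $\rge(e)$ rather than source $\src(e)$, which is what the equation $\rge(\nu'\mu')=\rge(\nu')$ guarantees. The case $j=i$ needs no separate argument, since that colour is already covered by $e$ itself.
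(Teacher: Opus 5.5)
Your proposal is correct and is essentially the paper's own proof: you obtain $i\in\Em(\rge(e))$ from $e$ itself, then for $j\ne i$ refactorise $e\nu$ as $\nu'\mu'$ with $d(\nu')=\eps_j$, so that $\rge(\nu')=\rge(e)$. As you note, only the factorisation property is needed, not local convexity.
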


\begin{proof}
Since $e \in \rge(e)\Lambda^{\eps_i}$, we have $i\in\Em(\rge(e))$. Let
$j\in\Em(\src(e))$ with $j\ne i$. Choose $\nu\in\src(e)\Lambda^{\eps_j}$; then
$\src(e)=\rge(\nu)$, so $e\nu$ is defined and $d(e\nu)=\eps_i+\eps_j$. By the
factorisation property $e\nu=\nu'\mu'$ with $d(\nu')=\eps_j$, $d(\mu')=\eps_i$, and
$\rge(\nu')=\rge(e\nu)=\rge(e)$. Thus $\nu'\in\rge(e)\Lambda^{\eps_j}$, giving
$j\in\Em(\rge(e))$. The case $j=i$ is already covered by the first sentence.
\end{proof}

\noindent
Lemma~\ref{lem:prop} says that the emitted-colour set is monotone along edges,
toward the range: emission can only grow as one moves along the arrows in $\Lambda$.

\section{Forced concentration and the main theorem}

\begin{proposition}[Forced concentration]\label{prop:conc}
Let $\Lambda$ be a finite, singly connected, edge-level non-degenerate $k$-graph
with at least one edge. Then there is a vertex $v_0$ with
$\Em(v_0)=\{1,\dots,k\}$.
\end{proposition}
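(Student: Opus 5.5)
The plan is to find a vertex $v_0$ that \emph{receives} no colour at all but \emph{emits} at least one colour, and then apply Lemma~\ref{lem:count} and Lemma~\ref{lem:prop} to a single edge ending at $v_0$.

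\emph{Step 1: a terminal vertex.} Since $\Lambda$ has at least one edge, pick $e_1\in\Lambda^{\eps_{i_1}}$. Set $v_0=\src(e_1)$ and $v_1=\rge(e_1)$. Build a sequence inductively:
\begin{itemize}
\item if $\Rec(v_n)\ne\emptyset$, choose an edge $e_{n+1}$ with $\src(e_{n+1})=v_n$ and put $v_{n+1}=\rge(e_{n+1})$;
\item otherwise, stop.
\end{itemize}
The vertices $v_0,v_1,\dots$ are pairwise distinct. Indeed, if $v_m=v_n$ with $m<n$, then the composite $e_n e_{n-1}\cdots e_{m+1}$ is defined and lies in $v_n\Lambda v_m=v_n\Lambda v_n$. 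Its degree is a sum of $n-m\ge 1$ generators of $\NN^k$, so it is nonzero. This contradicts Lemma~\ref{lem:noloop}.

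Since $\Lambda^0$ is finite, the process stops at some $v_N$ with $N\ge 1$. Call this vertex $u$. It satisfies $\Rec(u)=\emptyset$, and the edge $e:=e_N$, of colour $i:=i_N$, has $\rge(e)=u$, so $i\in\Em(u)$.

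\emph{Step 2: boxing forces emission at the source.} Apply Lemma~\ref{lem:count} to $e$. Edge-level non-degeneracy gives
\[
\Em(\src(e))\cup\Rec(u)\supseteq\{1,\dots,k\}\setminus\{i\}.
\]
Because $\Rec(u)=\emptyset$, this says $\Em(\src(e))\supseteq\{1,\dots,k\}\setminus\{i\}$.

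\emph{Step 3: propagate.} Lemma~\ref{lem:prop} then gives
\[
\Em(u)\supseteq\{i\}\cup\Em(\src(e))=\{1,\dots,k\}.
\]
So $u$ is the required vertex with $\Em(u)=\{1,\dots,k\}$.

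The only delicate point is Step 1: finiteness together with single connectedness must produce a vertex with no outgoing (source) edges that is nonetheless the range of some edge. Starting the walk at an actual edge, rather than at an arbitrary vertex, guarantees that the terminal vertex has nonempty emission. The no-loop argument via Lemma~\ref{lem:noloop} guarantees that the walk terminates.
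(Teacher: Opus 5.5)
Your proof is correct and uses the same ingredients as the paper's: Lemma~\ref{lem:count} and Lemma~\ref{lem:prop} applied to an edge ending at a vertex, and finiteness with Lemma~\ref{lem:noloop} to stop a walk along edges. The only difference is that you argue directly (walk forward until a vertex $u$ with $\Rec(u)=\emptyset$, then show $\Em(u)=\{1,\dots,k\}$), whereas the paper argues by contradiction that if no vertex emits every colour then the range of every edge receives a colour, so the walk never stops and closes into a loop. Your orientation of the walk ($\src(e_{n+1})=v_n$, $v_{n+1}=\rge(e_{n+1})$) is the one consistent with the definition of $\Rec$. The paper's Step~2 as printed has it reversed ($\rge(f_1)=v_0$, $v_1=\src(f_1)$), so your version is the correct reading of that step.
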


\begin{proof}
Suppose, for contradiction, that $\lvert\Em(v)\rvert\le k-1$ for every vertex $v$.

\emph{Step 1: every emitting vertex receives an edge.}
Let $e\in\Lambda^{\eps_i}$. By Lemma~\ref{lem:prop},
$\Em(\rge(e))\supseteq\{i\}\cup\bigl(\Em(\src(e))\setminus\{i\}\bigr)$, a disjoint
union, so
\[
\bigl\lvert \Em(\src(e))\setminus\{i\}\bigr\rvert
\;\le\;\lvert\Em(\rge(e))\rvert-1\;\le\;k-2 .
\]
By Lemma~\ref{lem:count} and edge-level non-degeneracy,
\[
\bigl(\Em(\src(e))\setminus\{i\}\bigr)\cup\bigl(\Rec(\rge(e))\setminus\{i\}\bigr)
=\{1,\dots,k\}\setminus\{i\},
\]
so $\lvert\Rec(\rge(e))\setminus\{i\}\rvert\ge (k-1)-(k-2)=1$. In particular
$\Rec(\rge(e))\ne\emptyset$: \emph{the range of every edge $e \in \Lambda^{\varepsilon_i}$ receives some colour.}
Since every vertex which emits a colour is the range of an edge, every emitting
vertex receives an edge.

\emph{Step 2: an infinite backward chain.}
As $\Lambda$ has an edge, pick a vertex $v_0$ that emits it. By Step~1 $v_0$ receives an edge,
so there is an edge $f_1$ with $\rge(f_1)=v_0$. Put $v_1:=\src(f_1)$; then $v_1$
emits an edge ($f_1$ witnesses it), so by Step~1 it receives an edge, giving $f_2$ with
$\rge(f_2)=v_1$, and so on. Inductively we obtain edges $f_1,f_2,\dots$ and
vertices $v_0,v_1,\dots$ with
\[
\rge(f_{n+1})=v_n=\src(f_n)\qquad(n\ge 1),
\]
so each composite $f_{n}f_{n-1}\cdots f_{m+1}$ is defined.

\emph{Step 3: contradiction.}
Since $\Lambda^0$ is finite, the sequence $(v_n)$ repeats: $v_m=v_n=:y$ for some
$m>n\ge 0$. Then (reading from right to left)
\[
\sigma:=f_{n+1} \cdots f_{m-1} f_{m}
\]
satisfies $\rge(\sigma)=\rge(f_{n+1})=v_{n+1}=y$ and $\src(\sigma)=\src(f_{m})=v_m=y$, so
$\sigma\in y\Lambda y$, while
$d(\sigma)=\sum_{\ell=n+1}^{m}\eps_{c(f_\ell)}\ne 0$. This contradicts
Lemma~\ref{lem:noloop}. Hence some vertex emits all $k$ colours.
\end{proof}

\begin{lemma}[$K_5$ at a $5$-emitting vertex]\label{lem:k5}
Let $\Lambda$ be a singly connected, locally convex $k$-graph and $v_0$ a vertex
emitting (at least) five colours, say $1,\dots,5\in\Em(v_0)$. Then $\Sk_\Lambda$
contains a subdivision of $K_5$; in particular $\Sk_\Lambda$ is non-planar.
\end{lemma}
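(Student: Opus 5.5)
The plan is to build the $K_5$ subdivision on the five sources of edges emitted at $v_0$, leaving $v_0$ itself out of the subdivision. For each $i\in\{1,\dots,5\}$ I choose an edge $e_i\in v_0\Lambda^{\eps_i}$ and put $w_i:=\src(e_i)$. These five vertices $w_1,\dots,w_5$ will be the branch vertices.

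For $i<j$ the edges $e_i,e_j$ have common range $v_0$. Local convexity (Definition~\ref{def:lc}) then supplies $f_{ij}\in w_i\Lambda^{\eps_j}$ and $f_{ji}\in w_j\Lambda^{\eps_i}$ with $e_if_{ij}=e_jf_{ji}$. Let $x_{ij}$ be the common source of $f_{ij}$ and $f_{ji}$. In the underlying undirected graph of $\Sk_\Lambda$, the path $P_{ij}: w_i - x_{ij} - w_j$ (along $f_{ij}$, then $f_{ji}$) joins $w_i$ to $w_j$ and has the single interior vertex $x_{ij}$. The ten paths $P_{ij}$ together form a subdivision of $K_5$, provided the fifteen vertices $w_1,\dots,w_5$ and $x_{ij}$ ($i<j$) are pairwise distinct. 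Under that condition the ten paths are internally disjoint and avoid the other branch vertices, and their edges are automatically distinct because their endpoint pairs differ. Non-planarity then follows from Kuratowski's theorem.

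The main point to check is distinctness, and I will do it with degrees and single connectedness. Each vertex above receives a path from $v_0$ of a prescribed degree:
\[
e_i\in v_0\Lambda w_i \text{ has degree } \eps_i,\qquad e_if_{ij}\in v_0\Lambda x_{ij} \text{ has degree } \eps_i+\eps_j .
\]
Suppose two of the listed vertices coincide, say both equal $u$. Then $v_0\Lambda u$ contains two elements of different degrees, because the degrees $\eps_i$ ($1\le i\le5$) and $\eps_i+\eps_j$ ($i<j$) are pairwise distinct elements of $\NN^k$. This contradicts $\lvert v_0\Lambda u\rvert\le 1$. In particular $w_i\ne w_j$, $x_{ij}\ne x_{kl}$ for $\{i,j\}\ne\{k,l\}$, and $x_{ij}\ne w_l$.

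Since $v_0$ is not used as a vertex of the subdivision, I need no separate argument that it differs from the others. One could still note that $w_i\ne v_0$ and $x_{ij}\ne v_0$ by Lemma~\ref{lem:noloop}, since the corresponding paths have nonzero degree. The only subtle step is this distinctness bookkeeping; the degree argument handles it uniformly, so I expect no further obstacle.
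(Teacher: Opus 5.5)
Your proposal is correct and matches the paper's proof essentially step for step. You use the same branch vertices $w_i=\src(e_i)$, close the ten pairwise squares with the paper's form of local convexity (Definition~\ref{def:lc}, which directly supplies $e_if_{ij}=e_jf_{ji}$), and separate the fifteen vertices by the same degree argument via $\lvert v_0\Lambda u\rvert\le 1$ before invoking Kuratowski's theorem.
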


\begin{proof}
For $\ell\in \Em (v_0) =\{1,\dots,5\}$ choose $e_\ell\in v_0\Lambda^{\eps_\ell}$ and put
$w_\ell:=\src(e_\ell)$. Fix $\ell\ne m$. Since $\rge(e_\ell)=\rge(e_m)=v_0$, local
convexity (Definition~\ref{def:lc}) yields
$e'_m\in\src(e_\ell)\Lambda^{\eps_m}$ and
$e'_\ell\in\src(e_m)\Lambda^{\eps_\ell}$ with $e_\ell e'_m=e_m e'_\ell$. Set
\[
d_{\ell m}:=\src(e_\ell e'_m)=\src(e'_m)=\src(e'_\ell).
\]
Then $\rge(e'_m)=\src(e_\ell)=w_\ell$ and $\rge(e'_\ell)=\src(e_m)=w_m$, so in
$\Sk_\Lambda$ the edges $e'_m$ and $e'_\ell$ form an undirected path
\[
w_\ell\;\text{---}\;d_{\ell m}\;\text{---}\;w_m
\]
of length two with interior vertex $d_{\ell m}$.

\emph{Distinctness of the fifteen vertices $w_\ell,d_{\ell m}$ $1 \le \ell < m \le 5$, .}
The element $e_\ell$ lies in $v_0\Lambda w_\ell$ with $d(e_\ell)=\eps_\ell$, and
$e_\ell e'_m\in v_0\Lambda d_{\ell m}$ with degree $\eps_\ell+\eps_m$. If two of the
fifteen vertices coincided, then $v_0\Lambda(\cdot)$ would contain two morphisms of
distinct degrees (the relevant degrees among $\eps_\ell$ and $\eps_\ell+\eps_m$ are
pairwise distinct, and all are nonzero), contradicting single connectedness. Hence
$w_1,\dots,w_5$ and the ten $d_{\ell m}$ are fifteen distinct vertices, and each
$d_{\ell m}$ lies on exactly one of the ten connecting paths.

Therefore $w_1,\dots,w_5$ are the branch vertices of a subdivision of $K_5$, each
of its ten edges subdivided once by a distinct $d_{\ell m}$. A graph containing a
$K_5$-subdivision is non-planar by Kuratowski's theorem \cite{Kur,Diestel}.
\[
\begin{tikzpicture}[
  branch/.style = {circle, draw=black, line width=0.5pt, fill=black!10,
                   minimum size=8mm, inner sep=0pt, font=\small},
  subdiv/.style = {circle, draw=black, line width=0.3pt, fill=white,
                   minimum size=5.5mm, inner sep=0pt, font=\scriptsize},
  edge/.style   = {line width=0.6pt, black!65}
]

  \def\R{3.6}
  \foreach \i in {1,...,5}{
    \coordinate (W\i) at ({90-(\i-1)*72}:\R);
  }

  \foreach \i/\j in {1/2,1/3,1/4,1/5,2/3,2/4,2/5,3/4,3/5,4/5}{
    \draw[edge] (W\i) -- (W\j);
    \coordinate (D\i\j) at ($(W\i)!0.5!(W\j)$);
  }


   Uncomment to display the apex, the five spokes, and the shaded (1,2)-square.
  
   \coordinate (V0) at (0,0);
   \fill[black!4] (V0) -- (W1) -- (D12) -- (W2) -- cycle;
   \foreach \i in {1,...,5}{ \draw[edge, dashed] (V0) -- (W\i); }
   \node[subdiv, fill=black!18] at (V0) {$v_0$};

\draw[edge] (W5) -- (W2);
    \coordinate (D25) at ($(W2)!0.5!(W5)$);
   ------------------------------------------------------------------------

  \foreach \i/\j in {1/2,1/3,1/4,1/5,2/3,2/4,2/5,3/4,3/5,4/5}{
    \node[subdiv] at (D\i\j) {$d_{\i\j}$};
  }

  \foreach \i in {1,...,5}{
    \node[branch] at (W\i) {$w_{\i}$};
  }

  \node[anchor=north, font=\small] at (0,-4.0)
    {The ten commuting squares, by corner $w_j \,d_{ij}\, w_i \, v_0$ (reading right to left):};

  \node[anchor=north, font=\footnotesize] at (0,-4.7)
  {%
    \begin{tabular}{@{}l@{\qquad}l@{}}
      $(1,2)$:\ \ $w_2 \,d_{12} \,w_1\, v_0$ & $(2,4)$:\ \ $w_4  \,d_{24}\, w_2\, v_0$\\[3pt]
      $(1,3)$:\ \ $w_3 \, d_{13}\, w_1\, v_0$ & $(2,5)$:\ \ $w_5\,d_{25}\, w_2 \, v_0$ \\[3pt]
      $(1,4)$:\ \ $w_4 \,d_{14}\, w_1 \,v_0$ & $(3,4)$:\ \ $w_4 \,d_{34}\, w_3 \, v_0$\\[3pt]
      $(1,5)$:\ \ $w_5 \,d_{15}\, w_1 \, v_0$ & $(3,5)$:\ \ $w_5 \,d_{35}\, w_3 \, v_0$\\[3pt]
      $(2,3)$:\ \ $w_3 \,d_{23}\, w_2 \, v_0$ & $(4,5)$:\ \ $w_5 \,d_{45}\, w_4 \, v_0$\\
    \end{tabular}%
  };

  \node[anchor=north, font=\scriptsize, text width=12cm, align=center] at (0,-7.3)
  {Each $(i,j)$ entry is the commuting-square relation
   $v_0\xrightarrow{\,i\,}w_i\xrightarrow{\,j\,}d_{ij}
        =v_0\xrightarrow{\,j\,}w_j\xrightarrow{\,i\,}d_{ij}$;
   the apex $v_0$ is the common range of the five colour edges.};

\end{tikzpicture}
\]
\end{proof}

\begin{proof}[Proof of Theorem~\ref{thm:main}]
Let $\Lambda$ be finite, connected, singly connected, locally convex, edge-level
non-degenerate, of rank $k\ge 5$. By Proposition~\ref{prop:conc} some vertex
$v_0$ emits all $k$ colours, in particular five of them; by Lemma~\ref{lem:k5} the $1$-skeleton
$\Sk_\Lambda$ contains a subdivision of $K_5$ and is non-planar. The final
assertion is the contrapositive: a tree of rank $\ge 5$ with these properties
cannot have planar $1$-skeleton, so a planar one has rank $\le 4$.
\end{proof}

\section{Remarks}\label{sec:remarks}

\begin{remark}[Sharpness]
The bound is sharp, and the argument  in Proposition~\ref{prop:conc} explains why. Running
Proposition~\ref{prop:conc} at rank $k=4$ still forces a vertex emitting all four
colours, but the corresponding construction in Lemma~\ref{lem:k5} produces only a
subdivision of $K_4$, which is planar. Thus the obstruction first appears at rank
$5$, where $K_5$ becomes the smallest non-planar complete graph. This is exactly
the threshold realised by the construction in \cite[Theorem~A]{Pask}, whose planar
trees occur in ranks $2,3,4$.
\end{remark}

\begin{remark}[Role of the four-colour theorem]
The theorem above is purely an obstruction: it uses only that $K_5$ is not planar
\cite{Kur}. The four-colour theorem \cite{AppelHaken} is used in \cite{Pask} for
the complementary, constructive statement that rank $4$ is always \emph{attainable}
(four colours suffice to colour the faces of a planar polyhedron). The two
ingredients of a complete ``$4$-colour-type'' theory for planar higher-rank trees
are therefore independent: achievability of $\le 4$ from the four-colour theorem,
and impossibility of $\ge 5$ from $K_5$, the latter proved here.
\end{remark}

\begin{remark}[The two readings of planarity, and $Q_4$]\label{rem:2complex}
One may instead read ``planar'' as planarity of the associated \emph{$2$-complex}
$X_\Lambda$: the CW-complex whose $0$- and $1$-cells are the vertices and edges of
$\Sk_\Lambda$ and which carries one $2$-cell glued along each commuting square (the
$2$-skeleton of the cubical complex of \cite{KPSS}). Since a complex embeds in the
sphere only if its $1$-skeleton does, planarity of $X_\Lambda$ is \emph{stronger}
than the hypothesis of Theorem~\ref{thm:main}, and the corresponding bound
\[
X_\Lambda\ \text{planar and}\ \Lambda\ \text{edge-level non-degenerate}
\ \Longrightarrow\ \operatorname{rank}\Lambda\le 4
\]
is an immediate corollary: planarity of $X_\Lambda$ forces planarity of
$\Sk_\Lambda$, whereupon Theorem~\ref{thm:main} applies.

The two readings do \emph{not} coincide, and in particular one cannot reduce the
$1$-skeleton statement to the $2$-complex one by promoting $1$-skeleton planarity to
planarity of $X_\Lambda$. The obstruction is the hypercube
$Q_4=\Omega_{4,\mathbf 1}$, regarded as a rank-$4$ $k$-graph \cite[Example~6.1]{Pask}.
It is a connected, singly connected, locally convex higher-rank tree, and it is
edge-level non-degenerate: an edge of $Q_4$ varies a single coordinate and lies on
the three two-faces obtained by additionally varying one of the other three
coordinates, so it boxes with all three remaining colours. Each edge of $Q_4$
therefore borders \emph{three} commuting squares. But any subcomplex of $S^2$ has at
most two $2$-cells along a given edge, so $X_{Q_4}$ is not a surface, hence not
planar; correspondingly $\Sk_{Q_4}$ already contains a subdivision of $K_{3,3}$ and
is non-planar \cite[Example~6.1]{Pask}. Thus the surface condition ``every edge borders
at most two squares'', which planarity of $X_\Lambda$ entails, is \emph{not} a
consequence of being a non-degenerate locally convex tree: $Q_4$ disproves it.
(As $Q_4$ has rank $4$, it is of course no counterexample to
Theorem~\ref{thm:main}, which constrains only rank $\ge 5$.) Consequently
$2$-complex planarity is a strictly stronger and non-automatic hypothesis, and the
$1$-skeleton reading adopted in Theorem~\ref{thm:main} --- that of
\cite[Theorem~A]{Pask} --- is the substantive one.
\end{remark}

\begin{remark}[Necessity of edge-level non-degeneracy]
The interaction-graph weakening of Definition~\ref{def:nd} --- requiring only that
each \emph{pair} of colours box \emph{somewhere} --- is too weak. Glue, for each
pair $\{i,j\}\subseteq\{1,\dots,5\}$, one $(i,j)$-square, identifying all of their
sink corners to a single vertex. The result is connected, singly connected,
locally convex, has planar $1$-skeleton (ten quadrilaterals meeting at a point),
is a tree, and realises every pair of colours, yet has rank $5$. It is excluded by
Definition~\ref{def:nd}, since each of its edges lies in exactly one square and so
boxes with only one other colour. This matches the description of the degenerate
high-rank examples in \cite[\S5]{Pask}.
\end{remark}

\begin{remark}[Finiteness]
Finiteness is used only in Step~2--3 of Proposition~\ref{prop:conc}: in an infinite
$k$-graph the backward chain $(v_n)$ may fail to repeat, and the conclusion can
fail. The infinite case --- relevant to the $\tilde A_2$ examples of
\cite[\S5]{Pask} --- is not addressed here.
\end{remark}

\begin{remark}[On the hypotheses]
Single connectedness is used twice, each time only through
Lemma~\ref{lem:noloop}: to close the chain in Proposition~\ref{prop:conc} and to
separate the fifteen vertices in Lemma~\ref{lem:k5}. Local convexity is used only
in Lemma~\ref{lem:k5}, to close the ten pairwise squares at $v_0$. Both hold for
the trees of \cite[Theorem~A]{Pask}. We did not use the triviality of the fundamental
group beyond what single connectedness already supplies.
\end{remark}

\begin{declaration}
I acknowledge the use of Claude to brainstorm ideas and proofread grammar. A full record of prompts and outputs is available upon request
\end{declaration}

\end{document}